\documentclass[12pt, reqno, oneside]{amsart}
\usepackage{amsmath, amsthm, amssymb, amscd, array}
\usepackage{enumitem, geometry, graphics, graphicx, pslatex}
\usepackage[colorlinks=true, pdfstartview=FitV, linkcolor=black, citecolor=black, urlcolor=black]{hyperref}

\usepackage{diagbox} 

\theoremstyle{plain}
\newtheorem{theorem}{Theorem}

\newtheorem{problem}[theorem]{Problem} 

\theoremstyle{definition}
\newtheorem{definition}[theorem]{Definition}
\newtheorem{remark}[theorem]{Remark}
\newtheorem{example}[theorem]{Example} 

\allowdisplaybreaks

\makeatletter
\@addtoreset{footnote}{page}
\makeatother

\begin{document}

\title{A Snail Race Problem}

\author{Yu Chen and Wenxiang Zhu}
\address{Department of Mathematics and Statistics, Idaho State University \\                  
Department of Computer Science, Mathematics, and Physics, Baker University}

\maketitle

\begin{abstract}
Inspired by Problem~17 from the 2024 American Mathematics Competition (AMC) 10B, this work focuses on enumerating the distinct outcomes of a snail race with specified number of ties of a certain type. 
We begin by developing a recurrence relation and subsequently derive a closed-form formula for the number of possible outcomes using the exponential generating function method.
Two special cases of the problem are considered in detail.
Our analysis also explores the connections between the solution to this problem and the ordered Bell numbers, Stirling numbers of the second kind, and partial Bell polynomials. 
\end{abstract}

\section{Introduction} \label{intro} 

Problem~17 from the 2024 American Mathematics Competition (AMC) 10B states:
``\textit{In a race among 5 snails, there is at most one tie, but that tie can involve any number of snails. 
For example, the result of the race might be that Dazzler is first; Abby, Cyrus, and Elroy are tied for second; and Bruna is fifth. 
How many different results of the race are possible?}"
This intriguing problem in elementary combinatorics naturally leads to our exploration of the following general problem.

\begin{problem}[Snail Race Problem] \label{prob:snail-race}
Find the number $r_m(n,k)$ of possible outcomes in a race among $n$ distinct snails that contains exactly $k$ ties of size at least $m$, where $n$ and $m$ are positive integers and $k$ is a nonnegative integer.
\end{problem}

An outcome of the race is an ordered partition of a set of $n$ elements and a tie of size $m$ is a block in the partition with exactly $m$ elements.
Using the symbol introduced in Problem~\ref{prob:snail-race}, we can express the answer to Problem~17 from the 2024 AMC 10B as $r_2(5,1)$.
This problem is closely related to several well-established combinatorial concepts, e.g., the partitions of a finite set, Stirling numbers of the second kind, ordered Bell numbers, and partial Bell polynomials. 
The Stirling number of the second kind, $S(n,k)$, counts the ways to partition a set of $n$ elements into exactly $k$ nonempty subsets (Graham \textit{et al.} \cite{GKP94}). 
The ordered Bell number, $J_n$, counts the ways to partition a set of $n$ elements into ordered nonempty subsets, which can also be interpreted as the total possible outcomes in a race with $n$ participants or the combinations for an $n$-button combination lock. 
The properties of ordered Bell numbers and their exponential generating function have been extensively investigated (see Bell \cite{BellNum}, Cayley \cite{Cayley1859}, Good \cite{Good75}, Gross \cite{Gross62}, Mendelson \cite{EM82}, and Velleman and Call \cite{VC95}). 

In \cite{EM82}, Mendelson applied the inclusion-exclusion principle to derive a closed-form formula for $J_{nk}$, the number of all possible outcomes in a race of $n$ participants where only the first $k$ places are occupied. Subsequently, $J_n$ is obtained via $J_n=\sum_{k=1}^n J_{nk}$. 
The combinatorial numbers, $r_m(n,k)$, also satisfy the equation $J_n=\sum_{k=0}^{\infty} r_m(n,k)$; moreover, $r_1(n,k)=J_{nk}$.
To the best of our knowledge, Problem~\ref{prob:snail-race} has not been studied in the existing literature.

\section{Recurrence Relation} \label{RR} 

Let $\mathbb{Z}$, $\mathbb{Z}_{\geqslant 0}$, and $\mathbb{N}$ denote the sets of all integers, nonnegative integers, and natural numbers, respectively.
Let $k, n \in \mathbb{Z}$ and $m \in \mathbb{N}$.
For convenience, we define $r_m(0,0)=1$, $r_m(0,k)=0$ for $k \geqslant 1$, and $r_m(n,k)=0$ for either $n<0$ or $k<0$.  

To gain familiarity with Problem~\ref{prob:snail-race}, we begin by considering several specific cases.

\begin{example} 
Let $m, n \in \mathbb{N}$ and $k \in \mathbb{Z}_{\geqslant 0}$. 

(1) When $k=0$ and $m=2$, there is no tie of size greater than $1$, which implies $r_2(n,0)=n!$. 

(2) For $r_m(n,k) \ne 0$, we must have $km \leqslant n$.
Hence $r_m(n,k)=0$ if $k> \left\lfloor \frac{n}{m} \right\rfloor$, where $\lfloor x \rfloor$ represents the greatest integer less than or equal to a real number $x$. 

(3) A race with $mn$ participants that results in exactly $n$ ties of size at least $m$ is equivalent to partitioning a set of $mn$ elements into the $n$ ordered subsets of size $m$. 
Thus, 
\begin{equation*}
 r_m(mn,n)
=\binom{mn}{m} 
  \binom{m(n-1)}{m}
  \binom{m(n-2)}{m} 
  \cdots 
  \binom{m}{m}
=\frac{(mn)!}{(m!)^n}.
\end{equation*}
\end{example}

For the general case, we have the following recurrence relation.

\begin{theorem} \label{thm_rcr1} 
For $m, n \in \mathbb{N}$ and $k \in \mathbb{Z}_{\geqslant 0}$, 
\begin{equation} \label{recur1} 
 r_m(n,k)
=\sum_{i=1}^{m-1} \binom{n}{i} r_m(n-i,k) + \sum_{i=m}^n \binom{n}{i} r_m(n-i,k-1),
\end{equation}
\end{theorem}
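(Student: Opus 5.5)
We prove \eqref{recur1} by a first-block decomposition. An outcome of the race is an ordered set partition $(B_1,B_2,\dots,B_\ell)$ of the set $[n]$ of $n$ snails, and the statistic in question is the number of blocks of size at least $m$. We classify outcomes according to the first block $B_1$, i.e.\ the set of snails finishing (possibly tied) in first place. Let $i=|B_1|$, so $1\leqslant i\leqslant n$.

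Fix $i$ and a subset $B_1\subseteq[n]$ with $|B_1|=i$; there are $\binom{n}{i}$ such subsets. Removing $B_1$ leaves the ordered partition $(B_2,\dots,B_\ell)$ of the remaining $n-i$ snails, which is itself an outcome of a race on $n-i$ distinct snails. The map $(B_1,\dots,B_\ell)\mapsto (B_1,(B_2,\dots,B_\ell))$ is a bijection between outcomes on $[n]$ with first block of size $i$ and pairs consisting of an $i$-subset together with an outcome on its complement. The inverse simply prepends $B_1$. When $i=n$, the complement is empty; it has only the empty ordered partition, which matches the convention $r_m(0,0)=1$ and $r_m(0,k)=0$ for $k\geqslant1$. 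Since $r_m$ depends only on the number of snails, relabeling the complement to $[n-i]$ is harmless.

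Next we track the statistic. If $i\leqslant m-1$, then $B_1$ is not a tie of size at least $m$. So the full outcome has exactly $k$ such ties if and only if the remaining outcome does, giving $\binom{n}{i}r_m(n-i,k)$ outcomes. If $i\geqslant m$, then $B_1$ contributes one such tie, so the remainder must have exactly $k-1$, giving $\binom{n}{i}r_m(n-i,k-1)$. When $k=0$ this term vanishes by the convention $r_m(n,-1)=0$, which is correct because such outcomes cannot have zero large ties. Summing over $i$ yields \eqref{recur1}.

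Two edge cases need checking. When $m=1$, the first sum is empty and every block counts. When $m-1>n$, the terms with $i>n$ have $\binom{n}{i}=0$ (and $r_m(n-i,\cdot)=0$ for $n-i<0$), so they contribute nothing. The main point requiring care is exactly this bookkeeping of the boundary conventions; the bijection itself is routine.
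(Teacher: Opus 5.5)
Your proposal is correct and follows essentially the same route as the paper's proof. Both condition on the set of $i$ snails tied for first place, which can be chosen in $\binom{n}{i}$ ways, and split into the cases $i\leqslant m-1$ and $i\geqslant m$. You go further than the paper by explicitly checking the bijection and the boundary conventions (empty remainder, $k=0$, $m=1$, and $m-1>n$), which the paper leaves implicit.
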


\begin{proof} 
For $1 \leqslant i \leqslant n$, there are exactly $\binom{n}{i}$ ways to choose a group of $i$ participants to tie for the first place among $n$ participants, and then consider two cases for the remaining $n-i$ participants.

\textit{Case~1:} 
If $i \leqslant m-1$, then there are exactly $k$ ties of size at least $m$ among the remaining $n-i$ participants. 
This occurs in $r_m(n-i,k)$ ways.

\textit{Case~2:}  
If $i \geqslant m$, then there are exactly $k-1$ ties of size at least $m$ among the remaining $n-i$ participants. 
This occurs in $r_m(n-i,k-1)$ ways.

Combining these two cases, we obtain the required recurrence relation.
\end{proof} 

\begin{example} \label{mwt}
(1) The number of possible outcomes for a 5-participant race with no ties of size at least $3$ can be determined by considering three cases: 
(i) two tied groups of size $2$: $\frac{1}{2!} \binom{5}{2}\binom{3}{2} (3!)=90$ outcomes;
(ii) one tied group of size $2$: $\binom{5}{2}(4!)=240$ outcomes;
(iii) no tied group of size $2$: $5!=120$ outcomes. 
The total is $90+240+120=450$ outcomes.

(2) The number of possible outcomes for a 5-participant race with exactly one tie of size at least $3$ can be determined by considering four cases: 
(i) one tied group of size $3$ and one tied group of size $2$: $\binom{5}{3}(2!)=20$ outcomes;
(ii) exactly one tied group of size $3$ and two groups of size $1$: $\binom{5}{3}(3!)=60$ outcomes;
(iii) exactly one tied group of size $4$: $\binom{5}{4}(2!)=10$ outcomes;
(iv) exactly one tied group of size $5$: 1 outcome. 
The total is $20+60+10+1=91$ outcomes.
\end{example}

\begin{example} 
Table~\ref{table1} collects certain values of $r_2(n,k)$. 
For example, when $n=5$, the recurrence relation \eqref{recur1} can be written as
\begin{equation*}
 r_2(5,k)
=\binom{5}{1} r_2(4,k) + \sum_{i=2}^5 \binom{5}{i} r_2(5-i,k-1)
\quad\mbox{for $k=0,1,2$}.
\end{equation*}
We find 
\begin{align*}
  r_2(5,0)
&=5!=120, \\
  r_2(5,1)
&=\binom{5}{1} r_2(4,1) + \sum_{i=2}^5 \binom{5}{i} r_2(5-i,0) \\
&=5 \cdot 45 + 10 \cdot 6 + 10 \cdot 2 + 5 \cdot 1 + 1 \cdot 1
 =311, \\      
  r_2(5,2)
&=\binom{5}{1} r_2(4,2) + + \sum_{i=2}^5 \binom{5}{i} r_2(5-i,2) \\
&=5 \cdot 6 + 10 \cdot 7 + 10 \cdot 1 + 5 \cdot 0 + 1 \cdot 0
 =110.
\end{align*}

The answer to Problem~17 from the 2024 AMC 10B is $r_2(5,1)=311$ outcomes.

\begin{table}[h!]
\centering
\caption{Certain values of $r_2(n,k)$, with unlisted entries being zero.}
\label{table1}
\begin{tabular}{|c||r|r|r|r|r||r|}
\hline
\diagbox{$n$}{$k$} & $0$ & $1$ & $2$ & $3$ & $4$ & $J_n$ \\
\hline \hline
$0$ & $1$ &  &  &  &  \hphantom{143640} & 1 \\
\hline
$1$ & $1$ &  &  &  &  & $1$ \\
\hline
$2$ & $2$ & $1$ & & &  & $3$ \\
\hline
$3$ & $6$ & $7$ & & &  & $13$ \\
\hline
$4$ & $24$ & $45$ & $6$ & & & $75$ \\
\hline
$5$ & $120$ & $311$ & $110$ & & & $541$ \\
\hline
$6$ & $720$ & $2383$ & $1490$ & $90$ & & $4683$ \\
\hline
$7$ & $5040$ & $20301$ & $18802$ & $3150$ & & $47293$ \\
\hline
$8$ & $40320$ & $191369$ & $238126$ & $73500$ & $2520$ & $545835$ \\
\hline
$9$ & $362880$ & $1982971$ & $3119622$ & $1478148$ & $143640$ & $7087261$ \\
\hline
\end{tabular}
\end{table}
\end{example}

\begin{remark}
As shown in Table \ref{table1}, the row totals perfectly align with the sequence of ordered Bell numbers.
This relationship holds because for $m \in \mathbb{N}$ and $n \in \mathbb{Z}_{\geqslant 0}$,
\begin{equation*}
 J_n
=\sum_{k=0}^\infty r_m(n,k)
=\sum_{k=0}^{\left\lfloor n/m \right\rfloor} r_m(n,k).
\end{equation*}
\end{remark}

\section{Exponential Generating Functions} \label{egf} 
For $m \in \mathbb{N}$ and $k \in \mathbb{Z}_{\geqslant 0}$, the exponential generating function associated with the sequence $\{ r_m(n,k) \}_{n=0}^\infty$ is defined as
\begin{equation*}
g_{m,k}(x)=\sum_{n=0}^\infty r_m(n,k) \frac{x^n}{n!}.
\end{equation*}

\begin{theorem} \label{generating-function}
Let $T_{m-1}(x)$ be the $(m-1)$--th order Taylor polynomial of $e^x$ near $x=0$, that is,
$T_{m-1}(x)=\sum_{n=0}^{m-1} \frac{x^n}{n!}$.
Then
\begin{equation} \label{gfm} 
g_{k,m}(x)=\frac{[e^x-T_{m-1}(x)]^k}{[2-T_{m-1}(x)]^{k+1}}.
\end{equation}
\end{theorem}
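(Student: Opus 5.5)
We translate the recurrence of Theorem~\ref{thm_rcr1} into a functional equation for the exponential generating functions, and solve it by induction on $k$. (Note $g_{k,m}$ in \eqref{gfm} means the function $g_{m,k}$ defined above.) Write $T=T_{m-1}(x)$, set $A(x)=T-1=\sum_{i=1}^{m-1}x^i/i!$ and $B(x)=e^x-T=\sum_{i\ge m}x^i/i!$, so that $2-T=1-A$.

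\textbf{Step 1: the functional equation.} For $n\ge 1$ the recurrence \eqref{recur1} says that $r_m(n,k)/n!$ equals the coefficient of $x^n$ in $A(x)g_{m,k}(x)+B(x)g_{m,k-1}(x)$. Indeed, $\binom{n}{i}r_m(n-i,k)/n! = \frac{1}{i!}\cdot\frac{r_m(n-i,k)}{(n-i)!}$, which is a binomial convolution. The conventions $r_m(n,k)=0$ for $n<0$ or $k<0$ let the sums run over all $i$ with no boundary trouble; we set $g_{m,-1}=0$. The $n=0$ terms need separate care. Since $A$ and $B$ have no constant term, the right-hand side contributes nothing at $x^0$, while $r_m(0,k)=\delta_{k,0}$. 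Hence
\begin{equation*}
g_{m,k}(x)=\delta_{k,0}+A(x)\,g_{m,k}(x)+B(x)\,g_{m,k-1}(x).
\end{equation*}
Checking this constant-term bookkeeping is the only delicate point. One must also confirm that \eqref{recur1} really holds for every $n\ge1$, including small $n<m$ and the case $k=0$; this is immediate from the combinatorial proof of Theorem~\ref{thm_rcr1}.

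\textbf{Step 2: solve.} Because $1-A$ has constant term $1$, it is invertible as a formal power series. Therefore
\begin{equation*}
g_{m,0}=\frac{1}{1-A}=\frac{1}{2-T},\qquad g_{m,k}=\frac{B}{1-A}\,g_{m,k-1}\quad (k\ge1).
\end{equation*}
Induction on $k$ then gives $g_{m,k}=B^k/(1-A)^{k+1}=[e^x-T_{m-1}(x)]^k/[2-T_{m-1}(x)]^{k+1}$, which is \eqref{gfm}.

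As a sanity check, take $m=2$, $k=0$: then $T=1+x$ and $g=1/(1-x)$, giving $r_2(n,0)=n!$ as expected. Taking $m=1$ gives $T=1$, so $g_{1,k}=(e^x-1)^k$, whose coefficients are $k!\,S(n,k)=J_{nk}$, consistent with the introduction.

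All manipulations are identities of formal power series, so no convergence issues arise.
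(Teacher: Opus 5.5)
Your proposal is correct and takes essentially the paper's route: you turn the recurrence of Theorem~\ref{thm_rcr1} into the functional equation $g_{m,k}=\delta_{k,0}+(T_{m-1}-1)\,g_{m,k}+(e^x-T_{m-1})\,g_{m,k-1}$, solve it for $g_{m,k}$, and induct on $k$. The only cosmetic difference is that you handle $k=0$ and $k\geqslant 1$ together, using $\delta_{k,0}$, $g_{m,-1}=0$ and formal power series, whereas the paper treats the two cases separately.
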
 

\begin{proof} 
(1) If $k=0$, then \eqref{recur1} becomes 
$r_m(n,0)=\sum_{i=1}^{m-1} \binom{n}{i} r_m(n-i,0)$ and
\begin{align*}
  g_{0,m}(x) 
&=\sum_{n=0}^{\infty} r_m(n,0) \frac{x^n}{n!} \\
&=1+\sum_{n=1}^{\infty} \left[ \sum_{i=1}^{m-1} \binom{n}{i} r_m(n-i,0) \right] \frac{x^n}{n!} \\
&=1+\sum_{n=1}^{\infty} \sum_{i=1}^{m-1} r_m(n-i,0) \frac{x^i}{i!} \frac{x^{n-i}}{(n-i)!} \\
&=1+\sum_{i=1}^{m-1} \frac{x^i}{i!} \sum_{n=1}^{\infty} r_m(n-i,0) \frac{x^{n-i}}{(n-i)!} \\
&=1+\sum_{i=1}^{m-1} \frac{x^i}{i!} \sum_{n=i}^{\infty} r_m(n-i,0) \frac{x^{n-i}}{(n-i)!} \\
&=1+\sum_{i=1}^{m-1} \frac{x^i}{i!} \sum_{n=0}^{\infty} r_m(n,0) \frac{x^{n}}{n!} \\
&=1+g_{0,m}(x)\sum_{i=1}^{m-1} \frac{x^i}{i!} \\
&=1+g_{0,m}(x)[T_{m-1}(x)-1].
\end{align*}
Solving for $g_{0,m}(x)$, we obtain
\begin{equation}\label{gfm0}
g_{0,m}(x)=\frac{1}{2-T_{m-1}(x)}.
\end{equation}

(2) If $k\geqslant 1$, then applying \eqref{recur1}, followed by interchanging the order of summation and re-indexing, leads to
\begin{align*}
  g_{k,m}(x) 
&=\sum_{n=1}^{\infty} r_m(n,k) \frac{x^n}{n!} \\
&=\sum_{n=1}^{\infty} \left[\sum_{i=1}^{m-1} \binom{n}{i} r_m(n-i,k)
 +\sum_{i=m}^n \binom{n}{i} r_m(n-i,k-1)\right] \frac{x^n}{n!} \\
&=\sum_{n=1}^{\infty} \sum_{i=1}^{m-1} r_m(n-i,k) \frac{x^i}{i!} \frac{x^{n-i}}{(n-i)!}
 +\sum_{n=1}^{\infty} \sum_{i=m}^n r_m(n-i,k-1) \frac{x^i}{i!} \frac{x^{n-i}}{(n-i)!} \\
&=\sum_{n=1}^{\infty} \sum_{i=1}^{m-1} r_m(n-i,k) \frac{x^i}{i!} \frac{x^{n-i}}{(n-i)!}
 +\sum_{n=m}^{\infty} \sum_{i=m}^n r_m(n-i,k-1) \frac{x^i}{i!} \frac{x^{n-i}}{(n-i)!} \\
&=\sum_{i=1}^{m-1} \frac{x^i}{i!} \sum_{n=i}^{\infty} r_m(n-i,k) \frac{x^{n-i}}{(n-i)!}
 +\sum_{i=m}^{\infty} \frac{x^i}{i!} \sum_{n=i}^{\infty} r_m(n-i,k-1) \frac{x^{n-i}}{(n-i)!} \\
&=\sum_{i=1}^{m-1} \frac{x^i}{i!} \sum_{n=0}^{\infty} r_m(n,k) \frac{x^{n}}{n!}
 +\sum_{i=m}^{\infty} \frac{x^i}{i!} \sum_{n=0}^{\infty} r_m(n,k-1) \frac{x^{n}}{n!} \\
&=g_{k,m}(x) \sum_{i=1}^{m-1} \frac{x^i}{i!}
 +g_{k-1,m}(x) \sum_{i=m}^{\infty} \frac{x^i}{i!} \\
&=g_{k,m}(x)[T_{m-1}(x)-1]+g_{k-1,m}(x)[e^x-T_{m-1}(x)].  
\end{align*}
Solving for $g_{k,m}(x)$, we obtain
\begin{equation} \label{gfm1} 
g_{k,m}(x)=\left[\frac{e^x-T_{m-1}(x)}{2-T_{m-1}(x)}\right] g_{k-1,m}(x).
\end{equation}
Using \eqref{gfm0}, \eqref{gfm1}, and mathematical induction on $k$, \eqref{gfm} follows. 
\end{proof}

\begin{remark}
(1) Since $J_n=\sum_{k=0}^\infty r_m(n,k)$, summing the exponential generating functions for the sequences $\{r_m(n,k)\}_{n=0}^{\infty}$ yields
\begin{equation*}
 \sum_{k=0}^\infty g_{m,k}(x)
=\frac{1}{2-T_{m-1}(x)} \sum_{k=0}^\infty \left[\frac{e^x-T_{m-1}(x)}{2-T_{m-1}(s)}\right]^k \\
=\frac{1}{2-e^x},
\end{equation*}
which coincides with the exponential generating function of $\{J_n\}_{n=0}^\infty$ (see \cite{EM82}).

(2) Note that $\lim_{m\to\infty} T_{m-1}(x)=e^x$.
If $k=0$, then from \eqref{gfm}, we have 
\begin{equation*}
\lim_{m\to \infty}g_{0,m}(x)=\frac{1}{2-e^x},
\end{equation*}
which is precisely the exponential generating function of $\{J_n\}_{n=0}^\infty$ (see \cite{EM82}).
\end{remark}

\begin{example} 
Consider the case where $m=3$. 
It corresponds to ties of size at least 3. 
In this case, we have $T_2(x)=1+x+\frac{x^2}{2}$ and 
\begin{equation*}
g_{k,3}(x)=\frac{\left(e^x-1-x-\frac{x^2}{2}\right)^k}{\left(1-x-\frac{x^2}{2}\right)^{k+1}}.
\end{equation*}
\end{example}

(1) If $k=0$, then $g_{0,3}(x)$ has the power series representation near $x=0$ as follows.
\begin{equation} \label{gf1} 
\begin{split}
  g_{0,3}(x)
&=\frac{1}{1-x-\frac{x^2}{2}} \\
&=1+x+3\cdot\frac{x^2}{2!}+12\cdot\frac{x^3}{3!}+66\cdot\frac{x^4}{4!}+450\cdot\frac{x^5}{5!}+ O(x^6)
\end{split}
\end{equation}
The coefficient of the $\frac{x^5}{5!}$-term in \eqref{gf1} gives us $r_3(5,0)=450$, which indicates that there are $450$ possible outcomes for a 5-participant race featuring no tie of size at least $3$ and aligns with the calculation presented in part~(1) of Example~\ref{mwt}.

(2) If $k=1$, then $g_{1,3}(x)$ has the power series representation near $x=0$ as follows.
\begin{equation} \label{gf2} 
\begin{split}
  g_{1,3}(x)
&=\frac{e^x-1-x-\frac{x^2}{2}}{\left(1-x-\frac{x^2}{2}\right)^2} \\
&=\frac{x^3}{3!}+9\cdot\frac{x^4}{4!}+91\cdot\frac{x^5}{5!}+ 973\cdot\frac{x^6}{6!}+ O(x^7) 
\end{split}
\end{equation}
The coefficient of the $\frac{x^5}{5!}$-term in \eqref{gf1} gives us $r_3(5,1)=91$, which indicates that there are $91$ possible outcomes for a 5-participant race featuring exactly one tie of size at least $3$ and aligns with the calculation presented in part~(2) of Example~\ref{mwt}.

\section{A Closed-Form Formula for $r_m(n,k)$} \label{Formula} 

With the exponential generating functions $g_{m,k}(x)$ given by \eqref{gfm}, we can obtain a closed-form formula for $r_m(n,k)$.

\begin{theorem} \label{formula1} 
Define
\begin{equation} \label{a_n}
a_0=1
\quad\mbox{and}\quad
 a_n
=\sum_{j=1}^n \sum_{\substack{i_1+i_2+\cdots+i_j=n \\ 1 \leqslant i_1,i_2,\ldots,i_j \leqslant m-1}} 
 \frac{n!}{i_1!i_2! \cdots i_j!}
\quad\mbox{for $n \in \mathbb{N}$}.
\end{equation}
For $k, m \in \mathbb{N}$ and $n \in \mathbb{Z}_{\geqslant 0}$, we have $r_m(n,0)=a_n$, and
\begin{equation}\label{form01}
 r_m(n,k)
=\sum_{l=km}^n \binom{n}{l} a_{n-l} 
  \sum_{\substack{l_1+l_2+\cdots+l_k=l \\ l_1,l_2,\ldots,l_k \geqslant m}}
  \frac{l!}{l_1!l_2! \cdots l_k!} \prod_{i=1}^k \left[\sum_{j=m}^{l_i}\binom{l_i}{j}a_{l_i-j}\right].
\end{equation}
\end{theorem}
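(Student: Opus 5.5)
The plan is to read off \eqref{form01} from the closed form \eqref{gfm} of Theorem~\ref{generating-function}. We factor $g_{k,m}(x)$ into pieces whose coefficients we can identify, then use the standard rule for multiplying exponential generating functions: if $A(x)=\sum a_n x^n/n!$ and $B(x)=\sum b_n x^n/n!$, then the coefficient of $x^n/n!$ in $AB$ is $\sum_l \binom{n}{l} a_{n-l} b_l$. More generally, the coefficient of $x^l/l!$ in a $k$-fold product $\prod_{i=1}^k B_i$ is $\sum_{l_1+\cdots+l_k=l} \frac{l!}{l_1!\cdots l_k!}\prod_i b^{(i)}_{l_i}$.

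\emph{Step 1 (the case $k=0$).} By \eqref{gfm0}, $g_{0,m}(x)=\frac{1}{2-T_{m-1}(x)}=\frac{1}{1-(T_{m-1}(x)-1)}=\sum_{j\geqslant 0}\bigl(T_{m-1}(x)-1\bigr)^j$, as formal power series. This is legitimate because $T_{m-1}(x)-1=\sum_{i=1}^{m-1}x^i/i!$ has zero constant term, so only $j\leqslant n$ contributes to the coefficient of $x^n$. Expanding $(T_{m-1}-1)^j$ with the multinomial product rule gives, as the coefficient of $x^n/n!$, the sum of $\frac{n!}{i_1!\cdots i_j!}$ over compositions $i_1+\cdots+i_j=n$ with $1\leqslant i_t\leqslant m-1$. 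Summing over $j$ yields exactly $a_n$ from \eqref{a_n}, with $a_0=1$ coming from $j=0$. Hence $r_m(n,0)=a_n$. (Combinatorially, these are ordered partitions into blocks of size less than $m$.)

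\emph{Step 2 (a single tie factor).} Set $h(x)=\frac{e^x-T_{m-1}(x)}{2-T_{m-1}(x)}=(e^x-T_{m-1}(x))\,g_{0,m}(x)$. Here $e^x-T_{m-1}(x)=\sum_{j\geqslant m}x^j/j!$, whose coefficients are $1$ for $j\geqslant m$ and $0$ otherwise. The product rule therefore gives $h(x)=\sum_l b_l\,x^l/l!$ with
\[
b_l=\sum_{j=m}^{l}\binom{l}{j}a_{l-j},
\]
and in particular $b_l=0$ for $l<m$.

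\emph{Step 3 (assembling $g_{k,m}$).} By \eqref{gfm}, $g_{k,m}(x)=g_{0,m}(x)\,h(x)^k$. Applying the $k$-fold product rule to $h^k$, the coefficient of $x^l/l!$ is $\sum_{l_1+\cdots+l_k=l}\frac{l!}{l_1!\cdots l_k!}\prod_{i} b_{l_i}$. Since $b_{l_i}=0$ whenever $l_i<m$, we may restrict to $l_i\geqslant m$. This forces $l\geqslant km$, so the coefficient vanishes for $l<km$. A final convolution with $g_{0,m}$, whose coefficients are $a_{n-l}$, gives \eqref{form01} with $l$ running from $km$ to $n$.

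There is no real obstacle. The only points needing care are the formal-power-series justification of the geometric expansion in Step 1, and checking that the restrictions $l_i\geqslant m$ and $l\geqslant km$ in \eqref{form01} lose nothing, which follows from the vanishing of $b_l$ for $l<m$. As a sanity check I would verify $r_3(5,1)=91$ against Example~\ref{mwt}.
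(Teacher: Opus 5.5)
Your proposal is correct and follows the paper's proof essentially step for step. The paper likewise writes $g_{k,m}(x)=\bigl[q_m(x)/(1-p_{m-1}(x))\bigr]^k\cdot\frac{1}{1-p_{m-1}(x)}$ with $p_{m-1}=T_{m-1}-1$ and $q_m=e^x-T_{m-1}$. It then expands the geometric series to get $a_n$, convolves with $q_m$ to get $\bar a_n=\sum_{j=m}^n\binom{n}{j}a_{n-j}$ (your $b_l$), and finishes with the $k$-fold multinomial convolution and one more binomial convolution with $g_{0,m}$.
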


\begin{proof}
Define 
\begin{equation*}
p_{m-1}(x)=\sum_{i=1}^{m-1} \frac{x^i}{i!}
\quad\mbox{and}\quad
q_{m}(x)=\sum_{i=m}^\infty \frac{x^i}{i!}.
\end{equation*}
Then $T_{m-1}(x)=1+p_{m-1}(x)$, $e^x=T_{m-1}(x)+q_m(x)$, and hence
\begin{equation*}
g_{k,m}(x)=\left[ \frac{q_m(x)}{1-p_{m-1}(x)} \right]^k \frac{1}{1-p_{m-1}(x)}.
\end{equation*}
First, rewrite the rational function $\frac{1}{1-p_{m-1}(x)}$ as a power series near $x=0$. 
\begin{align*}
  \frac{1}{1-p_{m-1}(x)}
&=\sum_{n=0}^\infty [p_{m-1}(x)]^n \\
&=1+\sum_{n=1}^\infty \sum_{j=n}^\infty 
  \sum_{\substack{i_1+i_2+\cdots+i_n=j \\ 1 \leqslant i_1,i_2,\ldots,i_n \leqslant m-1}} 
  \frac{x^{i_1}}{i_1!} \frac{x^{i_2}}{i_2!} \cdots \frac{x^{i_n}}{i_n!}  \\
&=1+\sum_{n=1}^\infty \sum_{j=n}^\infty 
  \sum_{\substack{i_1+i_2+\cdots+i_n=j \\ 1 \leqslant i_1,i_2,\ldots,i_n \leqslant m-1}} 
  \frac{x^j}{i_1! i_2! \cdots i_n!}  \\
&=1+\sum_{j=1}^\infty \sum_{n=1}^j 
  \sum_{\substack{i_1+i_2+\cdots+i_n=j \\ 1 \leqslant i_1,i_2,\ldots,i_n \leqslant m-1}}
  \frac{x^j}{i_1!i_2! \cdots i_n!} \\
&=1+\sum_{n=1}^\infty \sum_{j=1}^n 
  \sum_{\substack{i_1+i_2+\cdots+i_j=n \\ 1 \leqslant i_1,i_2,\ldots,i_j \leqslant m-1}}
  \frac{n!}{i_1!i_2! \cdots i_j!} \frac{x^n}{n!} \\
&=\sum_{n=0}^\infty a_n \frac{x^n}{n!}.
\end{align*}
We obtain $r_m(n,0)=a_n$ for $n \in \mathbb{Z}_{\geqslant 0}$. 
\begin{equation*}
 \frac{q_m(x)}{1-p_{m-1}(x)}
=\sum_{n=m}^\infty \sum_{l=m}^n \frac{x^l}{l!} \left[a_{n-l} \frac{x^{n-l}}{(n-l)!}\right] \\
=\sum_{n=m}^\infty \sum_{l=m}^n \binom{n}{l} a_{n-l} \frac{x^n}{n!}
\end{equation*}
Define 
\begin{equation*}
\bar{a}_n=\sum_{i=m}^n \binom{n}{i} a_{n-i}
\quad\mbox{for $n \in \mathbb{N}$}.
\end{equation*}
Then 
\begin{equation*} 
\frac{q_m(x)}{1-p_{m-1}(x)}=\sum_{n=m}^\infty \bar{a}_n \frac{x^n}{n!}
\end{equation*}
and 
\begin{align*}
  \left[ \frac{q_m(x)}{1-p_{m-1}(x)} \right]^k
&=\sum_{n=km}^\infty \sum_{\substack{l_1+l_2+\cdots+l_k=n \\ l_1,l_2,\ldots,l_k \geqslant m}}
  \left(\bar{a}_{l_1}\frac{x^{l_1}}{l_1!}\right) \left(\bar{a}_{l_2}\frac{x^{l_2}}{l_2!}\right)
  \cdots \left(\bar{a}_{l_k}\frac{x^{l_k}}{l_k!}\right) \\
&=\sum_{n=km}^\infty \sum_{\substack{l_1+l_2+\cdots+l_k=n \\ l_1,l_2,\ldots,l_k \geqslant m}}
  \frac{n!}{l_1!l_2! \cdots l_k!} \bar{a}_{l_1}\bar{a}_{l_2} \cdots \bar{a}_{l_k} \frac{x^n}{n!}. 
\end{align*}
Define 
\begin{equation*}
b_n=\sum_{\substack{l_1+l_2+\cdots+l_k=n \\ l_1,l_2,\ldots,l_k \geqslant m}}
  \frac{n!}{l_1!l_2! \cdots l_k!}\bar{a}_{l_1}\bar{a}_{l_2} \cdots \bar{a}_{l_k} 
\quad\mbox{for $n \in \mathbb{N}$}.
\end{equation*}
Then 
\begin{equation*}
 \left[ \frac{q_m(x)}{1-p_{m-1}(x)} \right]^k
=\sum_{n=km}^\infty b_n\frac{x^n}{n!}.
\end{equation*}
Thus,
\begin{align*}
  g_{k,m}(x)
&=\sum_{n=km}^\infty \sum_{l=km}^n \left(b_l\frac{x^l}{l!}\right)
  \left[a_{n-l}\frac{x^{n-l}}{(n-l)!}\right] \\
&=\sum_{n=km}^\infty \sum_{l=km}^n \binom{n}{l} a_{n-l}b_l \frac{x^n}{n!}.
\end{align*}
We obtain
\begin{align*} 
  r_m(n,k)
&=\sum_{l=km}^n \binom{n}{l} a_{n-l}b_l \\
&=\sum_{l=km}^n \binom{n}{l} a_{n-l} 
  \sum_{\substack{l_1+l_2+\cdots+l_k=l \\ l_1,l_2,\ldots,l_k \geqslant m}}
  \frac{l!}{l_1!l_2! \cdots l_k!} \prod_{i=1}^k \left[\sum_{j=m}^{l_i}\binom{l_i}{j}a_{l_i-j}\right].
\qedhere
\end{align*}
\end{proof}

\section{Two Special Cases} \label{Special-cases}
We consider two interesting special cases in Problem~\ref{prob:snail-race} as follows.

\textit{Case~1:} $m=1$

By \eqref{a_n}, we have $a_0=1$ and $a_n=0$ for $n \in \mathbb{N}$.
Then $r_m(0,0)=a_0=1$ and $r_m(n,0)=a_n=0$ for $n \in \mathbb{N}$. 
For $n \in \mathbb{Z}_{\geqslant 0}$ and $k \in \mathbb{N}$, it follows from \eqref{form01} that
\begin{equation} \label{special-case1}
 r_1(n,k)
=\sum_{\substack{l_1+l_2+\cdots+l_k=n \\ l_1,l_2,\ldots,l_k \geqslant 1}} \frac{n!}{l_1!l_2! \cdots l_k!}
=k! S(n,k)
=\sum_{i=1}^n (-1)^{k-i} \binom{k}{i} i^n,
\end{equation}
where the $S(n,k)$ are Stirling numbers of the second kind. 
This matches the closed-form formula for $J_{nk}$ in \cite{EM82}.

\textit{Case~2:} $m=2$

By \eqref{a_n}, we have $a_n=n!$ for $n \in \mathbb{Z}_{\geqslant 0}$.
Then $r_2(n,0)=a_n=n!$ for $n \in \mathbb{Z}_{\geqslant 0}$.
For $n \in \mathbb{Z}_{\geqslant 0}$ and $k \in \mathbb{N}$, it follows from \eqref{form01} that 
\begin{equation*}
 r_2(n,k)
=\sum_{l=2k}^n \frac{n!}{l!} \sum_{\substack{l_1+l_2+\cdots+l_k=l \\ l_1,l_2,\ldots,l_k \geqslant 2}} 
  \frac{l!}{l_1! l_2! \cdots l_k!} \prod_{i=1}^k \left(\sum_{j=2}^{l_i} \frac{l_i!}{j!}\right). 
\end{equation*}
Define 
\begin{equation} \label{Delta_nk}
 \Delta_n^k
=\{(l_1,l_2, \ldots, l_k) \mid l_1+l_2+\cdots+l_k=n 
 \ \mbox{ and } \ l_1,l_2,\ldots,l_k \geqslant 2\}.
\end{equation}
and
\begin{equation} \label{c_n}
c_n=\sum_{i=2}^n \frac{n!}{i!}
\quad\mbox{for $n \in \mathbb{N}$}. 
\end{equation}
Then \eqref{form01} can be expressible as
\begin{equation} \label{special-case2}
 r_2(n,k)
=\sum_{l=2k}^n \frac{n!}{l!} \sum_{(l_1,l_2,\ldots,l_k) \in \Delta_l^k} 
 \frac{l!}{l_1! l_2! \cdots l_k!} \prod_{i=1}^k c_{l_i},
\end{equation}

\begin{example} \label{example6} 
By \eqref{special-case2}, we obtain
\begin{equation} \label{rn1}
  r_2(n,1)
=\sum_{l=2}^n \frac{n!}{l!} \sum_{i=2}^l \frac{l!}{i!} 
=\sum_{l=2}^n \sum_{i=2}^l \frac{n!}{i!}
=\sum_{i=2}^n \sum_{l=i}^n \frac{n!}{i!}
=\sum_{i=2}^n \frac{n!(n+1-i)}{i!}.
\end{equation}
\end{example}

\begin{example}
Find $r_2(6,1)$, $r_2(6,2)$ and $r_2(6,3)$.

Using \eqref{rn1}, we obtain
\begin{equation*}
 r_2(6,1)
=\sum_{i=2}^6 \frac{6!(7-i)}{i!}
=2383.
\end{equation*}

By \eqref{c_n}, we have $c_2=1$, $c_3=4$, and $c_4=17$.
By \eqref{Delta_nk}, we also have 
\begin{equation*}
\Delta_n^2=\{(2,n-2),(4,n-4),\ldots,(n-2,2)\}
\quad \mbox{and} \quad
\Delta_6^3=\{(2,2,2)\}.
\end{equation*} 
Then 
\begin{equation} \label{rn2}
 r_2(6,2)
=\sum_{l=4}^6 \frac{6!}{l!} \sum_{i=2}^{l-2} \frac{l!}{i!(l-i)!} c_ic_{l-i}
=1490
\end{equation}
and 
\begin{equation*}
 r_2(6,3)
=\frac{6!}{6!}  \frac{6!}{2!2!2!} c_2^3
=90.
\end{equation*}
\end{example}

\section{A Second Closed-Form Formula for $r_2(n,k)$} \label{BellP} 

Alternatively, another closed-form formula for $r_2(n,k)$ can be obtained by evaluating the $n$-th derivative of the exponential generating function $g_{2,k}(x)$ at $x=0$. 
We first recall the definition of the partial Bell polynomials and Fa\`{a} di Bruno's Formula (see Bell \cite{BellPoly}, Comtet \cite{Comtet74}, Johnson \cite{Johnson02}, Riordan \cite{Riordan80}).

\begin{definition}[Partial Bell Polynomials]\label{def-BellP}
The \emph{partial Bell polynomials} are given by
\begin{equation}\label{bellp01}
   B_{i,k}(x_1, x_2, \cdots, x_{i-k+1}) 
   = \sum \frac{i!}{s_1!s_2!\cdots s_{i-k+1}!}\prod_{j=1}^{i-k+1}
\left(\frac{x_j}{j!}\right)^{s_j},
\end{equation}
where the summation takes place over all $s_1,s_2,\ldots,s_{i-k+1} \in \mathbb{Z}_{\geqslant 0}$ such that 
\begin{equation}\label{bellp02}
\left\{
\begin{array}{rcl}
  s_1+s_2+\cdots+s_{i-k+1} & = &  k, \\ 
  s_1+2s_2+\cdots+(i-k+1)s_{i-k+1} & = & i.
\end{array}
\right.
\end{equation}
\end{definition}

\begin{theorem}[Fa\`{a} di Bruno's Formula]\label{thm-faa}
If $f$ and $\phi$ are functions with a sufficient number of derivatives, then
\begin{equation}\label{faa}
 \frac{d^i}{dx^i}f(\phi(x))
=\sum \frac{i!}{s_1!s_2!\cdots s_i!}f^{(s_1+s_2+\cdots+s_i)}(\phi(x))
 \prod_{j=1}^i \left[\frac{\phi^{(j)}(x)}{j!}\right]^{s_j},
\end{equation}
where the summation is over all $i$-tuples $(s_1, s_2, \ldots, s_i)$ of nonnegative integers satisfying the constraint $s_1+2s_2+\cdots+is_i=i$.
\end{theorem}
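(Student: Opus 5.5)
Fa\`{a} di Bruno's formula is classical, and I would prove it by induction on $i$, the number of derivatives. The cleanest route is to prove the equivalent partial-Bell-polynomial form
\begin{equation*}
\frac{d^i}{dx^i} f(\phi(x)) = \sum_{k=1}^{i} f^{(k)}(\phi(x))\, B_{i,k}\bigl(\phi'(x),\phi''(x),\ldots,\phi^{(i-k+1)}(x)\bigr),
\end{equation*}
using Definition~\ref{def-BellP}. Grouping the $i$-tuples $(s_1,\ldots,s_i)$ in \eqref{faa} by $k=s_1+\cdots+s_i$ recovers exactly the inner sums \eqref{bellp01}. Those inner sums run over the constraints \eqref{bellp02}, because $s_j=0$ is forced for $j>i-k+1$.

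Rather than push the induction through the multinomial coefficients directly, I would argue with formal power series, which matches the paper's generating-function style. Fix a point $x$ and write
\begin{equation*}
\phi(x+t)=\phi(x)+u(t), \qquad u(t)=\sum_{j\ge1}\phi^{(j)}(x)\frac{t^j}{j!}.
\end{equation*}
Then expand
\begin{equation*}
f(\phi(x)+u)=\sum_{k\ge0} f^{(k)}(\phi(x))\frac{u^k}{k!}.
\end{equation*}
Next I would expand $u(t)^k/k!$ by the multinomial theorem. Choosing $s_j$ factors equal to $\phi^{(j)}(x)t^j/j!$, with $\sum s_j=k$, gives the coefficient $\frac{k!}{s_1!s_2!\cdots}$. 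After dividing by $k!$, the coefficient of $t^i/i!$ is
\begin{equation*}
\sum \frac{i!}{s_1!\cdots s_i!}\prod_j\left(\frac{\phi^{(j)}(x)}{j!}\right)^{s_j},
\end{equation*}
summed over $\sum s_j=k$ and $\sum j s_j=i$. Summing over $k$ then removes the first constraint, which leaves precisely \eqref{faa}. Comparing this with Taylor's theorem for $t\mapsto f(\phi(x+t))$, whose $t^i/i!$ coefficient is the $i$-th derivative, finishes the argument.

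The main obstacle is rigor. If $f$ and $\phi$ are only $C^i$ rather than analytic, the power-series composition is not literally justified. I would handle this by working with Taylor polynomials of order $i$ with $o(t^i)$ remainders. Composing $f$'s order-$i$ expansion at $\phi(x)$ with $u(t)$ is valid up to $o(t^i)$, since $u(t)=O(t)$, and only finitely many terms contribute to $t^i$. Uniqueness of the Taylor expansion then identifies the derivative.

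If that feels too delicate, the fallback is the direct induction on $i$. Differentiate each term $f^{(k)}(\phi)\prod_j(\phi^{(j)}/j!)^{s_j}$ by the product rule. Differentiating $f^{(k)}(\phi)$ raises $s_1$ by one, and differentiating a factor $\phi^{(j)}$ moves one count from $s_j$ to $s_{j+1}$. The remaining work is to verify that the coefficients recombine into $\frac{(i+1)!}{\prod s_j!}$. This coefficient bookkeeping is the tedious step, which is why I prefer the series argument.
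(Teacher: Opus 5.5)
The paper gives no proof of this statement to compare against. It quotes Fa\`{a} di Bruno's formula as a classical result, citing Bell, Comtet, Johnson and Riordan, and uses it only inside the proof of Theorem~\ref{thm-formula2}. Your argument is therefore a self-contained addition rather than a reconstruction, and it is correct.

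\begin{itemize}
\item Expanding $f(\phi(x)+u)$ in powers of $u(t)=\phi(x+t)-\phi(x)$ and applying the multinomial theorem to $u^k/k!$ gives the right result. Reading off the coefficient of $t^i/i!$ yields exactly \eqref{faa}. The constraint $\sum_j s_j=k$ is absorbed into the index of $f^{(k)}$, and the $k=0$ term vanishes for $i\geqslant 1$.
\item Your regrouping by $k$ into $B_{i,k}$ is also right. Since $\sum_j (j-1)s_j=i-k$, you do get $s_j=0$ for $j>i-k+1$.
\item The Peano-remainder version is sound, because $u(t)=O(t)$ turns $o(u^i)$ into $o(t^i)$.
\end{itemize}

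One point you should state explicitly: $F(t)=f(\phi(x+t))$ is $i$ times differentiable. This holds because compositions of $C^i$ maps are $C^i$, by repeated chain and product rules. You need it because an $o(t^i)$ polynomial approximation alone does not guarantee that $F^{(i)}$ exists; uniqueness of the expansion identifies the derivative only once you know it exists.

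Compared with the paper's black-box citation, your route is self-contained and uses the same generating-function style as Sections~\ref{egf} and~\ref{Formula}. It also fits the paper's only application unusually well. There $f(y)=y^k$ is a polynomial and $\phi(x)=e^x-1-x$ is entire, so your formal computation is fully rigorous with no remainder estimates. Specialized to that case, it shows directly that the coefficient of $x^i/i!$ in $\bigl(\sum_{j\geqslant 2}x^j/j!\bigr)^k$ is $k!\,B_{i,k}(0,1,1,\ldots,1)$. That is precisely \eqref{bellp04}, so your argument could replace the appeal to Fa\`{a} di Bruno in the proof of Theorem~\ref{thm-formula2} altogether.
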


The next theorem provides another closed-form formula for $r_2(n,k)$.

\begin{theorem}\label{thm-formula2} 
For $n,k \in \mathbb{N}$,  
\begin{equation}\label{bellp05} 
r_2(n,k)=\sum_{i=2}^n \binom{n}{i}(k+n-i)! B_{i,k}(0,1,1,\ldots,1),
\end{equation}
where the $B_{i,k}$ are the partial Bell polynomials defined in \eqref{bellp01}.
\end{theorem}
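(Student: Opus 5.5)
For $m=2$ we have $T_1(x)=1+x$, so Theorem~\ref{generating-function} gives
\begin{equation*}
g_{k,2}(x)=\frac{(e^x-1-x)^k}{(1-x)^{k+1}}.
\end{equation*}
The plan is to expand the numerator and the denominator separately as exponential generating functions and then multiply, so that the coefficient of $x^n/n!$ appears as a binomial convolution. This matches the shape $\sum_i\binom{n}{i}(\cdots)(\cdots)$ of \eqref{bellp05}.

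\textbf{The denominator.} By the binomial series,
\begin{equation*}
(1-x)^{-(k+1)}=\sum_{j\ge 0}\binom{k+j}{j}x^j=\sum_{j\ge0}\frac{(k+j)!}{k!}\frac{x^j}{j!},
\end{equation*}
so its $j$-th derivative at $0$ is $(k+j)!/k!$.

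\textbf{The numerator.} Write $\phi(x)=e^x-1-x$, so that $\phi(0)=0$, $\phi'(0)=0$, and $\phi^{(j)}(0)=1$ for $j\ge2$. Apply Theorem~\ref{thm-faa} with $f(y)=y^k$. The term $f^{(s)}(\phi(0))=f^{(s)}(0)$ vanishes unless $s=k$, in which case it equals $k!$. Since the constraint forces $s_1+\cdots+s_i=k$, only those tuples survive, and these are exactly the tuples appearing in Definition~\ref{def-BellP}; tuples with $s_j\neq 0$ for $j>i-k+1$ cannot satisfy both equations. Hence
\begin{equation*}
\frac{d^i}{dx^i}\phi(x)^k\Big|_{x=0}=k!\,B_{i,k}(0,1,1,\ldots,1).
\end{equation*}
This is the standard identity $\phi(x)^k/k!=\sum_i B_{i,k}(\phi_1,\phi_2,\ldots)x^i/i!$ with $\phi_j=\phi^{(j)}(0)$, and I may cite it directly instead. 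This value vanishes for $i<2k$, and in particular for $i<2$ when $k\ge1$.

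\textbf{The product.} By the Leibniz rule (equivalently, the EGF product rule), the $n$-th derivative of the product at $0$ is
\begin{equation*}
r_2(n,k)=\sum_{i=0}^n\binom{n}{i}\,k!\,B_{i,k}(0,1,\ldots,1)\cdot\frac{(k+n-i)!}{k!}.
\end{equation*}
The factors of $k!$ cancel. Because $k\ge1$, the terms with $i=0,1$ vanish, since $B_{0,k}=0$ and $B_{1,k}(0)=0$. This leaves exactly \eqref{bellp05}.

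The main obstacle is the bookkeeping in the Fa\`a di Bruno step: checking that the surviving tuples coincide with the index set \eqref{bellp02}, and that the normalization of $B_{i,k}$ in \eqref{bellp01} produces exactly the factor $k!$ that cancels against the denominator expansion. Beyond that, only routine verification remains, such as checking $n=5$, $k=1$ against the value $311$ in Table~\ref{table1}.
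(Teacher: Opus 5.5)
Your proposal is correct and follows the paper's proof essentially step for step. Like the paper, you apply the Leibniz rule to $(e^x-1-x)^k(1-x)^{-(k+1)}$, use that the $j$-th derivative of the denominator at $0$ is $(k+j)!/k!$, and apply Fa\`{a} di Bruno with $f(y)=y^k$, $\phi=e^x-1-x$ to get $k!\,B_{i,k}(0,1,\ldots,1)$; the $k!$ then cancels, and the $i=0,1$ terms vanish.
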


\begin{proof} 
We begin by applying the Leibniz formula for the $n$-th derivative of the exponential generating function $g_{2,k}(x)$:
\begin{equation}\label{leibniz} 
\begin{split}
  \left[g_{2,k}(x)\right]^{(n)}  
&=\left[(e^x-1-x)^k (1-x)^{-(k+1)}\right]^{(n)} \\
&=\sum_{i=0}^n \binom{n}{i} \left[(e^x-1-x)^k\right]^{(i)}\left[(1-x)^{-(k+1)}\right]^{(n-i)} \\
&=\sum_{i=0}^n {\binom{n}{i}}\frac{(k+n-i)!}{k!} \left[(e^x-1-x)^k\right]^{(i)} (1-x)^{-(k+1+n-i)}.
\end{split}
\end{equation}

Next, we evaluate the $i$-th order derivative of $(e^x-1-x)^k$ at $x=0$ using Fa\`{a} di Bruno's formula with $f(x)=x^k$ and $\phi(x)=e^x-1-x$. 
Note that $\phi^{\prime}(x)=e^x-1$ and $\phi^{(j)}(x)=e^x$ for $j \geqslant  2$. 
Evaluating these at $x=0$, we find $\phi(0)=\phi^{\prime}(0)=0$ and $\phi^{(j)}(0)=1$ for $j \geqslant 2$. 

In Fa\`{a} di Bruno's formula \eqref{faa}, a term is nonzero at $x=0$ only if $s_1=0$ (for $\phi^{\prime}(0)=0$) and $s_1+s_2+\cdots+s_i=k$ (for $f^{(j)}(0)=0$ for $j \ne k$ and $f^{(k)}(0)=k!$). 
Thus, the summation in \eqref{faa} is constrained by $s_1=0$ and $\sum_{j=1}^i s_j=k$, along with the original constraint $\sum_{j=1}^i js_j=k$. 
These constraints simplify to the system
\begin{equation} \label{bellp03}
\left\{
\begin{array}{rcl}
 s_2+s_3\cdots+s_{i-k+1} &=& k\\
 2s_2+3s_3+\cdots+(i-k+1)s_{i-k+1}&=&i\\
\end{array}
\right.
\end{equation}
and hence
\begin{equation} \label{bellp04}
\begin{split}
  \left[(e^x-1-x)^k\right]^{(i)}\Bigr\vert_{x=0} 
&=k!\sum \frac{i!}{s_2!s_3!\cdots s_i!}\prod_{j=2}^i \left(\frac{1}{j!}\right)^{s_j} \\
&=k! \vphantom{\frac{1}{2}}B_{i,k}(0,1,1,\ldots,1),
\end{split}
\end{equation}
where the $B_{i,k}$ are the partial Bell polynomials defined in \eqref{bellp01}.  

Finally, setting $x=0$ in \eqref{leibniz} and using \eqref{bellp04}, we obtain the desired result
\begin{equation}\label{bellp05a}
 r_2(n,k)
=\left[g_k(x)\right]^{(n)}\Bigr\vert_{x=0}= \sum_{i=2}^n \binom{n}{i}(k+n-i)! B_{i,k}(0,1,1,\ldots,1).
\end{equation}
The lower limit of the summation is $i=2$ because when $i=0$ or $i=1$, the derivative of $(e^x-1-x)^k$ at $x=0$ is zero, as indicated by the conditions on $s_1$ and $k$. 
\end{proof}

\begin{remark} 
The value of a partial Bell polynomial is zero if its defining summation conditions, given by \eqref{bellp02} or \eqref{bellp03}, has no nonnegative integer solution.  
Consequently, if $i<2k$,  $B_{i,k}(0,1,\ldots,1)=0$ because then \eqref{bellp03} has no nonnegative integer solution.
\end{remark}

\begin{example} (1) 
Consider the case where $k=1$ and $n \geqslant 2$. From \eqref{bellp05},
\begin{equation*}
r_2(n,1)=\sum_{i=2}^n \binom{n}{i}(n+1-i)! B_{i,1}(0,1,1, \ldots,1). 
\end{equation*}
For $2 \leqslant i \leqslant n$, \eqref{bellp03} becomes
\begin{equation*}
\left\{\begin{array}{rcl}
  s_2+s_3+\cdots+s_{i} &= & 1\\
  2s_2+3s_3+\cdots+is_{i}&=&i \\
\end{array}
\right.
\end{equation*}
which has the unique solution $(s_2, s_3, \ldots, s_i)=(0,0, \ldots, 1)$. 
So $B_{i,1}(0,1,1,\ldots,1)$ simplifies to $1$. 
Therefore, 
\begin{equation*}
 r_2(n,1)
=\sum_{i=2}^n \binom{n}{i}(n+1-i)! 
=\sum_{i=2}^n \frac{n!(n+1-i)}{i!},
\end{equation*}
which matches with \eqref{rn1} in Example~\ref{example6}.\\

(2) Consider the case where $k=2$ and $n=6$. 
Since $B_{2,2}(0,1)=B_{3,2}(0,1)=0$,
\begin{equation*}
\begin{array}{lcl}
   B_{4,2}(0,1,1) & = &\displaystyle \frac{4!}{2!}\left(\frac{1}{2!}\right)^2=3, 
   \\ 
   B_{5,2}(0,1,1,1)& = & \displaystyle \frac{5!}{1!1!}\left(\frac{1}{2!}\right)^1\left(\frac{1}{3!}\right)^1=10, \\
   B_{6,2}(0,1,1,1,1) & = & \displaystyle \frac{6!}{1!1!}\left(\frac{1}{2!}\right)^1
           \left(\frac{1}{4!}\right)^1
     +\frac{6!}{2!}\left(\frac{1}{3!}\right)^2=25
\end{array}
\end{equation*}
from \eqref{bellp05}, we have
\begin{align*}
  r_2(6,2)  
&=\sum_{i=2}^6 \binom{6}{i}(8-i)! B_{i,2}(0,1, \cdots,1)
 =1490,
\end{align*}
which matches with \eqref{rn2} in Example \ref{example6}.
\end{example}

\textbf{Disclosure Statement.}
The authors have no conflicts of interest to disclose. 

\bibliographystyle{vancouver}
\bibliography{Snail_Race_Bib}

@Article{	  BellNum,
  author   = {Bell, E. T},
  title		= {Exponential Numbers.},
  journal	= {Amer. Math. Monthly},
  volume	= {41},
  pages		= {411--419},
  year		= 1934
}

@Article{	  BellPoly,
  author   = {Bell, E. T},
  title		= {Exponential Polynomials.},
  journal	= {Ann. Math.},
  volume	= {2},
  number	= {2},
  pages		= {258--277},
  year		= 1934
}

@Article{	  Cayley1859,
  author     = {Cayley, A},
  title		= {On the analytic forms called trees, second part.},
  journal	= {Philos. Mag.},
  volume = {18},
  pages		= {374--378},
  year		= 1859
}

@Book{		Comtet74,
  author	= {Comtet, L},
  title		= {Advanced Combinatorics: The Art of Finite and Infinite Expansions},
  publisher	= {Reidel Publishing Company},
  address	= {Dordrecht, Holland / Boston, U.S.A.},
  year		= 1974
}

@Article{  EM82,
  author   ={Mendelson, E},
  title	= {Races with Ties.},
  journal	 = {Math. Mag.},
  volume 	= {55},
  number	= {3},
  pages		= {170--175},
  year		= 1982
}

@Book{		GKP94,
  author	= {Graham, R and Knuth, D and Patashnik, O},
  title		= {Concrete Mathematics: A Foundation for Computer Science},
  publisher	= {Addison-Wesley},
  address	= {Reading, MA},
  edition	= {2},
  pages		= {257--267},
  year		= 1994
}

@Article{  Good75,
  author	= {Good, I},
  title		= {The number of orderings of $n$ candidates when ties are permitted.},
  journal	= {Fibonacci Quartely},
  volume 	= {13},
  pages		= {11--18},
  year		= 1975
}

@Article{  Gross62,
  author   ={Gross, O},
  title	= {Preferential Arrangements.},
  journal	 = {Amer. Math. Monthly},
  volume 	= {69},
  number	= {1},
  pages		= {4--8},
  year		= 1962
}

@Article{	Johnson02,
  author   ={Johnson, W},
  title	= {The Curious History of Fa\'{a} di Bruno's Formula.},
  journal	 = {Amer. Math. Monthly},
  volume 	= {109},
  number	= {3},
  pages		= {217--234},
  year		= 2002
}

@Book{		Riordan80,
  author	= {Riordan, J},
  title		= {An Introduction to Combinatorial Analysis},
  publisher	= {Princeton University Press},
  address	= {Princeton, NJ},
  year		= 1980
}

@Article{  VC95,
  author   ={Velleman, D and Call, G},
  title	= {Permutations and Combination Locks.},
  journal	 = {Math. Mag.},
  volume 	= {68},
  number	= {4},
  pages		= {243--253},
  year		= 1995
}

\end{document}